\newtheorem{theorem}{Theorem}
\newtheorem{coro}{Corollary}
\newtheorem{defi}{Definition}
\newtheorem{remark}{Remark}
\newtheorem{prop}{Proposition}
\newtheorem{example}{Example}
\newcommand{\NN}{{\mathbb N}}
\newcommand{\PP}{{\mathbb P}}
\newcommand{\ZZ}{{\mathbb Z}}
\newcommand{\FF}{{\mathbb F}}
\newcommand{\bsb}{\boldsymbol{b}}
\newcommand{\bse}{\boldsymbol{e}}
\newcommand{\bsx}{\boldsymbol{x}}
\newcommand{\bsc}{\boldsymbol{c}}
\newcommand{\pdiv}{{\rm div}}
\newcommand{\vol}{{\rm Vol}}
\newcommand{\lcm}{{\rm lcm}}
\newcommand{\cP}{{\cal P}}
\newcommand{\cL}{{\cal L}}
\newenvironment{proof}{\begin{trivlist}
    \item[\hskip\labelsep{\it Proof.}]}{$\hfill\Box$\end{trivlist}}
\newcommand{\ustirli}[2]
{
\genfrac{[}{]}{0pt}{}{#1}{#2}}
\newcommand{\stirlii}[2]
{\genfrac{\{}{\}}{0pt}{}{#1}{#2}}
\begin{document}
\title{A construction of $(t,s)$-sequences with finite-row generating matrices using global function fields}
\date{\today}
\author{Roswitha Hofer and Harald Niederreiter}
\maketitle

\begin{abstract}
For any prime power $q$ and any dimension $s \ge 1$, we present a construction of $(t,s)$-sequences in base $q$ with finite-row
generating matrices such that, for fixed $q$, the quality parameter $t$ is asymptotically optimal as a function of $s$ as 
$s \to \infty$. This is the first construction of $(t,s)$-sequences that yields finite-row generating matrices and asymptotically
optimal quality parameters at the same time. The construction is based on global function fields. We put the construction into the
framework of $(u,\bse,s)$-sequences that was recently introduced by Tezuka. In this way we obtain in many cases better
discrepancy bounds for the constructed sequences than by previous methods for bounding the discrepancy.
\end{abstract}

\section{Introduction and basic definitions} \label{sec1}

Constructing sequences with good equidistribution properties is an important problem in number theory and has applications to quasi-Monte Carlo 
methods in numerical analysis (see, e.g., \cite{DP10, niesiam}). In this context, the star discrepancy appears as an important 
measure of uniform distribution. For a given dimension $s\geq 1$, let $J$ be a subinterval of $[0,1]^s$ and let $\bsx_0,\ldots, \bsx_{N-1}$ 
be $N$ points in $[0,1]^s$ (we speak also of a \emph{point set} $\cP$ of $N$ points in $[0,1]^s$). We write 
$A(J;\cP)=A(J;\bsx_0,\ldots, \bsx_{N-1})$ 
for the number of integers $0\leq n \leq N-1$ for which $\bsx_n\in J$. Then the \emph{star discrepancy} of the point set $\cP$ consisting of the points
$\bsx_0,\ldots,\bsx_{N-1}$ is defined by 
$$D_N^*(\cP)=D_N^*(\bsx_0,\ldots, \bsx_{N-1})=\sup_{J}\left| \frac{A(J;\cP)}{N}-\vol(J)\right|,$$
where the supremum is extended over all subintervals $J$ of $[0,1]^s$ with one vertex at the origin. 
For a sequence $S$ of points $\bsx_0,\bsx_{1},\ldots$ in $[0,1]^s$, the star discrepancy of the first $N$ terms of $S$ is defined as 
$D_{N}^*(S)=D_N^*(\bsx_0,\ldots, \bsx_{N-1})$. 
We say that $S$ is a \emph{low-discrepancy sequence} if $$D_N^*(S)=O(N^{-1}(\log N)^s) \qquad \mbox{for all $N\geq 2$},$$
where the implied constant does not depend on $N$. 
This is conjectured to be the least possible order of magnitude in $N$ that can be obtained for the star discrepancy of a sequence of points
in $[0,1]^s$. 

One of the most powerful methods for constructing low-discrepancy sequences is built on the theory of $(t,s)$-sequences and $(t,m,s)$-nets 
which was developed by Niederreiter~\cite{N87} on the basis of earlier work by Sobol'~\cite{sobol} and Faure~\cite{faure}. The reader is 
referred to, e.g., the monographs~\cite{DP10} and~\cite{niesiam} for the general background on this theory. In the following, we give 
a short description of the basic notions.  
For integers $b\geq 2$ and $0\leq t\leq m$, a \emph{$(t,m,s)$-net in base $b$} is a point set of $b^m$ points in $[0,1)^s$ such that every 
elementary interval $J \subseteq [0,1)^s$ in base $b$  with volume $b^{t-m}$ contains exactly $b^t$ points of the point set. By an 
\emph{elementary interval in base $b$} we mean an interval of the form $$\prod_{i=1}^s[a_ib^{-d_i},(a_i+1)b^{-d_i})$$ with integers $d_i\geq 0$ 
and $0\leq a_i<b^{d_i}$ for $1\leq i\leq s$. 
A sequence $\bsx_0,\bsx_1,\ldots$ of points in $[0,1]^s$ is called a \emph{$(t,s)$-sequence in base $b$}, where $b\geq 2$ and $t\geq 0$ are 
integers, if for all integers $k\geq 0$ and $m>t$ the points $[\bsx_n]_{b,m}$ with $kb^m\leq n<(k+1)b^m$ form a $(t,m,s)$-net in base $b$. Here 
$[\bsx_n]_{b,m}$ denotes the coordinatewise $m$-digit truncation in base $b$ of the point $\bsx_n$.  (For a detailed description we refer the 
reader to, e.g., \cite{T93}, \cite[Section 6.1.1]{T95}, and \cite[Section~2]{XN}; the important role of the truncation was emphasized again in the
recent paper~\cite{FL}.) In this concept the integer $t$ serves as a quality parameter, and generally speaking the smaller $t$ the more uniform the 
distribution of the sequence.

Most of the known $(t,s)$-sequences are obtained by the so-called \emph{digital method} which was developed by Niederreiter~\cite{N87} 
and constructs a sequence as follows. 
Choose a dimension $s\in\NN$, a finite field $\FF_q$ with cardinality $q$, and put $Z_q=\{0,1,\ldots,q-1\} \subset \ZZ$. 
Note that $q$ must be a prime power here. Choose 
\begin{enumerate}
\item[(i)] bijections $\psi_r: Z_q \to \FF_q$ for all integers $r\geq 0$, satisfying $\psi_r(0)=0$ for all sufficiently large $r$;
\item[(ii)] elements $c^{(i)}_{j,r} \in \FF_q$ for $1\leq i\leq s$, $j\geq 1$, and $r\geq 0$; 
\item[(iii)] bijections $\lambda_{i,j}: \FF_q\to Z_q$ for $1\leq i\leq s$  and $j\geq 1$.
\end{enumerate}
For the construction of the sequence we make use of the notion of the \emph{generating matrices} $C^{(i)}:=(c^{(i)}_{j,r})_{j\geq 1,r\geq 0}
\in\FF_q^{\NN\times \NN_0}$ for $1 \le i \le s$. If these matrices $C^{(1)},\ldots,C^{(s)}$ satisfy the property that each row of each matrix
contains only finitely many nonzero entries,
then we speak of \emph{finite-row generating matrices}. The $i$th coordinate $x_n^{(i)}$ of the $n$th point $\bsx_n=(x^{(1)}_n,\ldots,x^{(s)}_n)$ 
of the sequence is computed as follows. Given an integer $n \ge 0$, let $n=\sum_{r=0}^{\infty} z_r(n)q^r$ be the digit expansion of $n$ in base
$q$, with all $z_r(n) \in Z_q$ and $z_r(n)=0$ for all sufficiently large $r$. Then for $1 \le i \le s$ we form the matrix-vector product over
$\FF_q$ given by 
$$C^{(i)}\cdot\left(\begin{matrix}\psi_0(z_0(n))\\\psi_1(z_1(n))\\\vdots \end{matrix}\right)=:\left(\begin{matrix}y_{n,1}^{(i)}\\y_{n,2}^{(i)}\\
\vdots \end{matrix}\right).
$$
Finally, we put
$$x^{(i)}_n=\sum_{j=1}^{\infty}\lambda_{i,j}(y^{(i)}_{n,j})q^{-j}.$$

The distribution of the sequence $\bsx_0,\bsx_1,\ldots$ is mainly determined by the rank structure of the generating matrices. It is well known 
that the digital method generates a $(t,s)$-sequence in base $q$ if for every integer $m>t$ and all nonnegative integers $d_1,\ldots,d_s$ such 
that $d_1+\cdots+d_s=m-t$, the $(m-t)\times m$ matrix over $\FF_q$ formed by the row vectors 
$$(c^{(i)}_{j,0},c^{(i)}_{j,1},\ldots,c^{(i)}_{j,m-1}) \in \FF_q^m$$
with $1 \le j \le d_i$ and $1 \le i \le s$ has rank $m-t$ (see \cite[Section~4.4]{DP10} and \cite[Section~4.3]{niesiam}). 

Standard $(t,s)$-sequences obtained by the digital method include the Sobol' sequences~\cite{sobol}, Faure sequences~\cite{faure}, Niederreiter
sequences~\cite{N88}, generalized Niederreiter sequences~\cite{T93}, and Niederreiter-Xing sequences \cite{NX96, XN}. Summaries of the
constructions of these sequences can be found in \cite[Chapter~8]{DP10} and~\cite{N12}. 

The digital method can be applied also for the construction of $(t,m,s)$-nets in base $q$. In this case, the generating matrices $C^{(1)},\ldots,
C^{(s)}$ are $m \times m$ matrices over $\FF_q$ (see \cite[Section 4.4.1]{DP10}).

Tezuka~\cite{T12} recently pointed out a deeper regularity in the generating matrices of the generalized Niederreiter sequences and introduced 
so-called $(u,m,\bse,s)$-nets and $(u,\bse,s)$-sequences. He also established a general discrepancy bound for $(u,\bse,s)$-sequences and 
in this way he improved the constants in the discrepancy bounds known for generalized Niederreiter sequences.

In this paper we fulfill several objectives. In Section~\ref{sec2} we give a revised version of Tezuka's definition of $(u,m,\bse,s)$-nets 
which allows us to determine a corresponding quality parameter $t$ for a $(u,\bse,s)$-sequence in the language of $(t,s)$-sequences. In 
Section~\ref{sec3} we introduce a new construction principle for generating matrices of digital $(u,\bse,s)$-sequences using global function 
fields. From the viewpoint of $(t,s)$-sequences, these new sequences have the same quality parameter $t$ as the Niederreiter-Xing sequences, 
but the construction is simpler. Moreover, a refined construction in Section~\ref{sec4} yields $(u,\bse,s)$-sequences and $(t,s)$-sequences 
with the same parameters as in Section~\ref{sec3}, but with finite-row generating matrices. Finally, in Section~\ref{sec5} we discuss upper 
bounds on the star discrepancy for the sequences constructed in this paper.

\section{A revised definition of $(u,\bse,s)$-sequences} \label{sec2}

We follow up on the recent work of Tezuka~\cite{T12} on nets and $(t,s)$-sequences, but for several reasons (for instance, in order to prove
Proposition~\ref{prop1} below) we slightly revise his approach.

\begin{defi} \label{defi1} {\rm
Let $b\geq 2$, $s\geq 1$, and $0\leq u \leq m$ be integers and let $\bse=(e_1,\ldots, e_s) \in\NN^s$ be an $s$-tuple of positive integers. 
A \emph{$(u,m,\bse,s)$-net in base $b$} is a point set $\cP$ of $b^m$ points in $[0,1)^s$ such that $A(J;\cP)=b^m \vol(J)$ for every elementary 
interval $J$ in base $b$ of the form 
$$J=\prod_{i=1}^s\left[a_ib^{-d_i},(a_i+1)b^{-d_i}\right)$$
with integers $d_i\geq 0$, $0 \le a_i < b^{d_i}$, and $e_i|d_i$ for $1\leq i\leq s$ and with $\vol(J)\geq b^{u-m}$.}
\end{defi}

\begin{remark} \label{rm1} {\rm
The classical concept of a $(u,m,s)$-net in base $b$ corresponds to the special case $\bse =(1,\ldots,1)$ in Definition~\ref{defi1}. A simple
and well-known propagation rule for nets states that a $(u,m,s)$-net in base $b$ is also a $(v,m,s)$-net in base $b$ for any integer $v$ with
$u \le v \le m$ (see \cite[Remark 4.9(2)]{DP10} and \cite[Remark~4.3]{niesiam}). It is trivial that this propagation rule is valid also for
$(u,m,\bse,s)$-nets in base $b$.}
\end{remark}

\begin{remark} \label{rm2} {\rm
Tezuka~\cite{T12} introduced Definition~\ref{defi1}, but he used the condition $\vol(J)= b^{u-m}$ instead of $\vol(J) \ge b^{u-m}$. With this
original definition, the propagation rule for $(u,m,\bse,s)$-nets in base $b$ mentioned in 
Remark~\ref{rm1} need not hold. For instance, choose $b=s=2$ and $\bse=(2,3) \in \NN^2$. 
Furthermore, fix an integer $m\geq 3$. Now take $u=m-3$. Then the only possible elementary intervals $J \subseteq [0,1)^2$ in base $2$ in
Definition~\ref{defi1} with $\vol(J)=2^{-3}$ are of the form 
$$J=[0,1)\times[a_2/2^3,(a_2+1)/2^3).$$
Thus, the net property according to Tezuka's original definition depends only on the second coordinates in the point set $\cP$. Next take $u=m-2$. 
Then the only possible elementary intervals $J \subseteq [0,1)^2$ in base $2$ in Definition~\ref{defi1} with $\vol(J)=2^{-2}$ are of the form 
$$J=[a_1/2^2,(a_1+1)/2^2)\times[0,1).$$
Thus, the net property according to Tezuka's original definition depends only on the first coordinates in $\cP$. It is now clear that with 
Tezuka's original definition, $\cP$ can be a $(u,m,\bse,2)$-net in base $2$ for $u=m-3$, but not for $u=m-2$. }
\end{remark}

\begin{defi} \label{defi2} {\rm
Let $b \ge 2$, $s \ge 1$, and $u \ge 0$ be integers and let $\bse \in \NN^s$.
A sequence $\bsx_0,\bsx_1,\ldots$ of points in $[0,1]^s$ is called a \emph{$(u,\bse,s)$-sequence in base $b$} if for all 
integers $k\geq 0$ and $m>u$ 
the points $[\bsx_n]_{b,m}$ with $kb^m\leq n<(k+1)b^m$ form a $(u,m,\bse,s)$-net in base $b$.} 
\end{defi}

Just like in Remark~\ref{rm1}, it is clear that the classical concept of a $(u,s)$-sequence in base $b$ corresponds to the case $\bse =
(1,\ldots,1)$ in Definition~\ref{defi2}.

\begin{remark} \label{rm3} {\rm 
A further advantage of Definitions~\ref{defi1} and~\ref{defi2} is that we can drop the additional condition on $u$ and $m$ in Tezuka's 
original definitions of $(u,m,\bse,s)$-nets and $(u,\bse,s)$-sequences in base $b$, namely that $m-u$ is of the form $j_1e_1 + \cdots + j_se_s$
with integers $j_1,\ldots,j_s \ge 0$, where as above $\bse =(e_1,\ldots,e_s)$.}
\end{remark}

\begin{prop} \label{prop1}
Let $b\geq 2$, $s\geq 1$, and $0\leq u\leq m$ be integers and let $\bse=(e_1,\ldots,e_s)\in\NN^s$. Then any $(u,m,\bse,s)$-net in base $b$ is 
a $(t,m,s)$-net in base $b$ with $$t=\min(u+\sum_{i=1}^s(e_i-1),m).$$
\end{prop}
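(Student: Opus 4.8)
The plan is to verify directly the classical net-counting condition for an arbitrary elementary interval. By the definition of a $(t,m,s)$-net it suffices to show that $A(J;\cP)=b^m\vol(J)$ for every elementary interval $J=\prod_{i=1}^s[a_ib^{-d_i},(a_i+1)b^{-d_i})$ in base $b$ of volume exactly $b^{t-m}$, i.e.\ with $\sum_{i=1}^s d_i=m-t$ (the right-hand side is then $b^m\cdot b^{t-m}=b^t$). The case $t=m$ is trivial, since the only elementary interval of volume $b^0=1$ is $[0,1)^s$, which contains all $b^m$ points of $\cP$. So I may assume $t<m$, in which case $t=u+\sum_{i=1}^s(e_i-1)$.

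The main idea is to refine such a $J$ into finitely many smaller elementary intervals whose side-exponents are divisible by the corresponding $e_i$, so that the $(u,m,\bse,s)$-net hypothesis applies to each piece. For each $i$ set $d_i'=e_i\lceil d_i/e_i\rceil$, the least multiple of $e_i$ with $d_i'\ge d_i$; then $d_i\le d_i'\le d_i+e_i-1$. Each factor $[a_ib^{-d_i},(a_i+1)b^{-d_i})$ is the disjoint union of the $b^{d_i'-d_i}$ elementary intervals $[ab^{-d_i'},(a+1)b^{-d_i'})$ with $a_ib^{d_i'-d_i}\le a<(a_i+1)b^{d_i'-d_i}$, so $J$ itself is partitioned into finitely many elementary intervals $J_1,\ldots,J_r$, each of the form $\prod_{i=1}^s[a_i'b^{-d_i'},(a_i'+1)b^{-d_i'})$ with $e_i\mid d_i'$ for all $i$.

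It then remains to check that each piece meets the volume threshold of Definition~\ref{defi1}. Summing the bounds on the $d_i'$ gives $\sum_{i=1}^s d_i'\le\sum_{i=1}^s d_i+\sum_{i=1}^s(e_i-1)=(m-t)+\sum_{i=1}^s(e_i-1)=m-u$, where the last equality uses $t=u+\sum_{i=1}^s(e_i-1)$. Hence $\vol(J_\ell)=b^{-\sum_i d_i'}\ge b^{u-m}$, and the $(u,m,\bse,s)$-net property of $\cP$ yields $A(J_\ell;\cP)=b^m\vol(J_\ell)$ for every $\ell$. Since the $J_\ell$ partition $J$, summing gives $A(J;\cP)=\sum_{\ell=1}^r A(J_\ell;\cP)=b^m\sum_{\ell=1}^r\vol(J_\ell)=b^m\vol(J)$, as required.

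The substantive point, and the only place the precise value of $t$ enters, is the inequality $\sum_i d_i'\le m-u$: rounding each exponent up to the nearest multiple of $e_i$ inflates the total by at most $\sum_i(e_i-1)$, and $t$ is calibrated exactly so that this worst-case loss keeps every refined box above the $b^{u-m}$ threshold. The one thing to watch is that the rounding must go \emph{upward}, yielding a finer partition of $J$ over which the point counts add; rounding downward would enlarge the boxes and give no information about $A(J;\cP)$.
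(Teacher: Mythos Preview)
Your proof is correct and follows essentially the same approach as the paper: both refine each coordinate interval to one whose exponent is the next multiple of $e_i$ (your $d_i'=e_i\lceil d_i/e_i\rceil$ is exactly the paper's $d_i+w_i$), then use $\sum_i(d_i'-d_i)\le\sum_i(e_i-1)$ to verify that each refined box has volume at least $b^{u-m}$ and sum. The only cosmetic difference is that you restrict to intervals of volume exactly $b^{t-m}$ (which suffices by the definition of a $(t,m,s)$-net), whereas the paper works with $\vol(J)\ge b^{t-m}$.
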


\begin{proof}
Let $\cP$ be a $(u,m,\bse,s)$-net in base $b$. If $u+\sum_{i=1}^s(e_i-1)\geq m$, the statement is trivial. Now assume that 
$u+\sum_{i=1}^s(e_i-1)< m$. We consider an elementary interval $J \subseteq [0,1)^s$ in base $b$ of the form 
$$J=\prod_{i=1}^s[a_ib^{-d_i},(a_i+1)b^{-d_i})$$
with $\vol(J)\geq b^{t-m}$, i.e., with 
\begin{equation} \label{eq21}
\sum_{i=1}^s d_i \leq m-t=m-u-\sum_{i=1}^s(e_i-1).
\end{equation}
For each $i=1,\ldots,s$, let $w_i$ be the unique integer with $0 \le w_i \le e_i-1$ such that $d_i+w_i$ is a multiple of $e_i$. Then 
$$
[a_ib^{-d_i},(a_i+1)b^{-d_i})
= \bigcup_{h=1}^{b^{w_i}}[(a_ib^{w_i}+h-1)b^{-d_i-w_i},(a_ib^{w_i}+h)b^{-d_i-w_i})
$$
is the disjoint union of $b^{w_i}$ one-dimensional elementary intervals in base $b$ of length $b^{-d_i-w_i}$. Consequently, $J$ is the disjoint 
union of $s$-dimensional elementary intervals $K_h$, $1\leq h\leq b^{w_1+\cdots +w_s}=:H$, in base $b$ with 
$$
\vol(K_h) = \prod_{i=1}^s b^{-d_i-w_i}=b^{-\sum_{i=1}^s d_i-\sum_{i=1}^s w_i} 
\geq  b^{-m+u+\sum_{i=1}^s(e_i-1-w_i)}\geq b^{u-m} 
$$
for $1 \le h \le H$, where we used~\eqref{eq21} and $w_i \le e_i-1$ for $1 \le i \le s$. By Definition~\ref{defi1}, we have 
$$A(K_h;\cP)=b^m \vol(K_h) \qquad \mbox{for } 1 \le h \le H.$$
Therefore 
$$A(J;\cP)=\sum_{h=1}^HA(K_h;\cP) =b^m \sum_{h=1}^H \vol(K_h) =b^m\vol(J),$$
and so $\cP$ is a $(t,m,s)$-net in base $b$.
\end{proof}

\begin{coro} \label{coro1}
Let $b\geq 2$, $s\geq 1$, and $u\geq 0$ be integers and let $\bse=(e_1,\ldots,e_s)\in\NN^s$. 
Then any $(u,\bse,s)$-sequence in base $b$ is a $(t,s)$-sequence in base $b$ with $$t=u+\sum_{i=1}^s(e_i-1).$$
\end{coro}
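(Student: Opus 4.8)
The plan is to deduce the corollary directly from Proposition~\ref{prop1} by unwinding the definition of a $(u,\bse,s)$-sequence in terms of its constituent nets. First I would set $t=u+\sum_{i=1}^s(e_i-1)$ and observe that, since $u\le t$, the condition $m>t$ implies $m>u$; this ensures that whenever the $(t,s)$-sequence definition requires a net, the $(u,\bse,s)$-sequence hypothesis already supplies one. Concretely, fix integers $k\ge 0$ and $m>t$. By Definition~\ref{defi2}, the truncated points $[\bsx_n]_{b,m}$ with $kb^m\le n<(k+1)b^m$ form a $(u,m,\bse,s)$-net in base $b$.

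The key step is then to apply Proposition~\ref{prop1} to this net. The proposition tells us that any $(u,m,\bse,s)$-net in base $b$ is a $(t',m,s)$-net in base $b$ with $t'=\min(u+\sum_{i=1}^s(e_i-1),m)$. Since we are in the regime $m>t=u+\sum_{i=1}^s(e_i-1)$, the minimum is attained by the first argument, so $t'=t$ and the net is a genuine $(t,m,s)$-net in base $b$. Thus for every $m>t$ and every $k\ge 0$, the relevant truncated points form a $(t,m,s)$-net, which is exactly the defining property of a $(t,s)$-sequence in base $b$ (the classical $\bse=(1,\ldots,1)$ case of Definition~\ref{defi2}, i.e., the standard definition recalled in Section~\ref{sec1}).

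I expect no serious obstacle here; the result is essentially a transfer of the net-level Proposition~\ref{prop1} to the sequence level, and the only point needing care is the interplay of the parameter ranges. Specifically, one must check that the clause $m>t$ in the target $(t,s)$-sequence definition is compatible with, and in fact stronger than, the clause $m>u$ in the source $(u,\bse,s)$-sequence definition, so that the hypothesis is available precisely when needed; this is immediate from $u\le t$. The remaining verification is bookkeeping that the quality parameter delivered by Proposition~\ref{prop1} is constant in $m$ on the range $m>t$, which is exactly why the $\min$ collapses to $t$.
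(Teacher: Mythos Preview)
Your proposal is correct and is exactly the intended argument: the paper states Corollary~\ref{coro1} immediately after Proposition~\ref{prop1} without a separate proof, and your derivation---unwinding Definition~\ref{defi2}, noting $m>t\ge u$, and applying Proposition~\ref{prop1} so that the $\min$ collapses to $t$---is precisely the routine verification the paper leaves to the reader.
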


\begin{remark} \label{rm4} {\rm
Tezuka~\cite{T12} has shown with his original definition of a $(u,\bse,s)$-sequence that a generalized Niederreiter sequence is a 
$(0,\bse,s)$-sequence in base $q$, where $\bse =(e_1,\ldots,e_s) \in \NN^s$ and $e_i$, $1 \le i \le s$, is the degree of the $i$th 
base polynomial over $\FF_q$ in the construction of generalized Niederreiter sequences. It is immediately seen from his proof that
a generalized Niederreiter sequence is also a $(0,\bse,s)$-sequence in base $q$ in the sense of our Definition~\ref{defi2}.  
Corollary~\ref{coro1} implies that it is a $(t,s)$-sequence in base $q$ with $t=\sum_{i=1}^s(e_i-1)$. This gives another proof of this 
well-known result from~\cite{T93}. Consider now the case of the Niederreiter sequences constructed in~\cite{N88}. They form a subfamily
of the generalized Niederreiter sequences, and so a Niederreiter sequence is also a $(0,\bse,s)$-sequence in base $q$ and a $(t,s)$-sequence
in base $q$ with $t=\sum_{i=1}^s (e_i-1)$. Dick and Niederreiter~\cite{DN} proved that this $t$-value is the exact value of the quality
parameter $t$ for Niederreiter sequences, that is, for any Niederreiter sequence no smaller value of $t$ is possible. The case of
Niederreiter sequences shows therefore that the formula for $t$ in Corollary~\ref{coro1} is in general best possible. } 
\end{remark}

The parameter $u$ of a $(u,m,\bse,s)$-net in base $q$ constructed by the digital method can be derived from the following result. 
We use the standard convention that an empty set of vectors from a vector space is linearly independent.

\begin{prop} \label{prop2}
The matrices $C^{(1)},\ldots, C^{(s)}\in\FF_q^{m\times m}$ generate a $(u,m,\bse,s)$-net in base $q$ if and only if, for any integers 
$d_1,\ldots,d_s \geq 0$ with $e_i|d_i$ for $1\leq i\leq s$ and $d_1 + \cdots + d_s \le m-u$, the collection of $d_1 + \cdots + d_s$ vectors
obtained by taking the first $d_i$ row vectors of $C^{(i)}$ for $1 \le i \le s$ is linearly independent over $\FF_q$. 
\end{prop}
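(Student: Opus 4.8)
The plan is to establish the equivalence by connecting the net condition in Definition~\ref{defi1} directly to the linear algebra of the generating matrices, following the standard correspondence used to characterize classical $(t,m,s)$-nets. The key observation is that for an elementary interval $J = \prod_{i=1}^s [a_ib^{-d_i},(a_i+1)b^{-d_i})$ with $e_i \mid d_i$, membership $[\bsx_n]_{q,m} \in J$ translates into a system of linear equations over $\FF_q$: the first $d_i$ coordinates of the digit vector of the $i$th coordinate $x_n^{(i)}$ must equal prescribed values, and by the digital construction these coordinates are exactly the images under $\lambda_{i,j}$ of the inner products of the first $d_i$ rows of $C^{(i)}$ with the input vector $(\psi_0(z_0(n)),\psi_1(z_1(n)),\ldots)^{\top}$. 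Since we are looking at points $[\bsx_n]_{q,m}$ determined by an $(u,m,\bse,s)$-net, only the first $m$ digits of $n$ matter, so the relevant system involves the truncated row vectors in $\FF_q^m$.

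First I would fix integers $d_1,\ldots,d_s \ge 0$ with $e_i \mid d_i$ and $d_1+\cdots+d_s \le m-u$, so that the corresponding interval $J$ has $\vol(J) = b^{-(d_1+\cdots+d_s)} \ge b^{u-m}$ and thus is exactly one of the intervals governed by Definition~\ref{defi1}. I would then count the points $\bsx_n$ with $0 \le n < b^m$ falling into $J$ by setting up the homogeneous-plus-inhomogeneous linear system described above: the number of such $n$ equals the number of solutions of a system of $D := d_1+\cdots+d_s$ linear equations in the $m$ unknowns $\psi_0(z_0(n)),\ldots,\psi_{m-1}(z_{m-1}(n))$. A standard dimension-count then shows that $J$ contains exactly $b^m \vol(J) = b^{m-D}$ points for every choice of the $a_i$ (equivalently, for every right-hand side) if and only if the coefficient matrix, whose rows are precisely the first $d_i$ truncated rows of each $C^{(i)}$, has full rank $D$, i.e.\ those $D$ row vectors are linearly independent over $\FF_q$.

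To prove the equivalence in both directions I would argue as follows. For the ``if'' direction, assuming linear independence for all admissible $(d_1,\ldots,d_s)$, the system has full rank, so the solution set has size $b^{m-D}$ independently of the right-hand side, which gives $A(J;\cP) = b^m\vol(J)$ for every elementary interval $J$ of the required form; hence $\cP$ is a $(u,m,\bse,s)$-net. For the ``only if'' direction, I would argue contrapositively: if for some admissible tuple the $D$ rows are linearly dependent, then the coefficient matrix has rank strictly less than $D$, so for some choice of the $a_i$ the corresponding inhomogeneous system has no solution while for others it has more than $b^{m-D}$ solutions, violating $A(J;\cP) = b^m\vol(J)$ for at least one such $J$; thus $\cP$ fails to be a $(u,m,\bse,s)$-net.

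The main obstacle I anticipate is bookkeeping rather than conceptual: one must carefully verify that the condition $e_i \mid d_i$ together with $\vol(J) \ge b^{u-m}$ picks out exactly the tuples $(d_1,\ldots,d_s)$ with $e_i \mid d_i$ and $d_1+\cdots+d_s \le m-u$, and that the propagation from the truncated digit vectors back to the full matrix-vector product is legitimate because the bijections $\psi_r$ and $\lambda_{i,j}$ preserve the counting (they are bijections fixing $0$ for large $r$, so digit patterns correspond bijectively to solution vectors). I would also need to confirm that the convention about the empty set being linearly independent correctly handles the degenerate case where all $d_i = 0$, for which $J = [0,1)^s$ and the net condition is vacuous. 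Apart from these care points, the argument is the natural generalization of the classical rank characterization of digital $(t,m,s)$-nets, with the divisibility constraints $e_i \mid d_i$ simply restricting which row blocks of the matrices enter the system.
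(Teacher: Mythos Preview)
Your proposal is correct and follows essentially the same approach as the paper, which simply states that the result is obtained by a straightforward adaptation of the standard arguments in \cite[Section~4.4.2]{DP10} and \cite[Section~4.3]{niesiam}. What you have written is precisely that adaptation spelled out in detail, with the only change from the classical $(t,m,s)$-net characterization being the restriction to tuples $(d_1,\ldots,d_s)$ satisfying $e_i\mid d_i$.
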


\begin{proof}
This follows by a straightforward adaptation of standard arguments that can be found in \cite[Section 4.4.2]{DP10} and \cite[Section~4.3]{niesiam}. 
\end{proof}

Using Proposition~\ref{prop2}, it is easy to determine the parameter $u$ of a $(u,\bse,s)$-sequence in base $q$ obtained by the digital method. 

\begin{prop} \label{prop3}
The matrices $C^{(1)},\ldots, C^{(s)}\in\FF_q^{\NN\times\NN_0}$ generate a $(u,\bse,s)$-sequence in base $q$ if and only if, for every 
integer $m>u$ and all integers $d_1,\ldots,d_s \geq 0$ with $e_i|d_i$ for $1\leq i\leq s$ and $1 \leq d_1+\cdots + d_s \leq m-u$, the 
$(d_1+\cdots + d_s)\times m$ matrix over $\FF_q$ formed by the row vectors 
$$(c^{(i)}_{j,0},c^{(i)}_{j,1},\ldots,c^{(i)}_{j,m-1}) \in \FF_q^m$$
with $1 \le j \le d_i$ and $1 \le i \le s$ has rank $d_1+\cdots + d_s$. 
\end{prop}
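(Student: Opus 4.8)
The plan is to reduce Proposition~\ref{prop3} to Proposition~\ref{prop2} by exploiting the definition of a $(u,\bse,s)$-sequence as a collection of $(u,m,\bse,s)$-nets. By Definition~\ref{defi2}, the matrices generate a $(u,\bse,s)$-sequence in base $q$ if and only if, for every $m>u$ and every $k\geq 0$, the truncated points $[\bsx_n]_{q,m}$ with $kq^m\leq n<(k+1)q^m$ form a $(u,m,\bse,s)$-net in base $q$. So the first step is to express these net conditions in terms of the generating matrices and then invoke Proposition~\ref{prop2}.

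The key technical point is that the net condition must hold for \emph{all} blocks $k\geq 0$ simultaneously, and one must show that this is equivalent to the single rank condition on the left $m$-column submatrices stated in the proposition. First I would fix $m>u$ and examine the $n$ with $kq^m\leq n<(k+1)q^m$. Writing the base-$q$ digits $z_r(n)$, the low-order digits $z_0(n),\ldots,z_{m-1}(n)$ range over all of $Z_q^m$ as $n$ runs through the block, while the high-order digits $z_m(n),z_{m+1}(n),\ldots$ are fixed by $k$. When computing $[\bsx_n]_{q,m}$, only the first $m$ output digits $y^{(i)}_{n,1},\ldots,y^{(i)}_{n,m}$ matter, and each such digit is a linear functional of the input digit vector. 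The crucial observation is that the first $m$ output digits of $C^{(i)}$ depend on the input digits $\psi_r(z_r(n))$ through only the first $m$ columns $(c^{(i)}_{j,r})_{1\le j\le m,\, r\ge 0}$; moreover the contribution of the fixed high-order digits (indices $r\geq m$) is a constant shift determined by $k$. Thus, after translating by this $k$-dependent constant, the distribution of $[\bsx_n]_{q,m}$ over the block is governed exactly by the $m\times m$ matrices $\what{C}^{(i)}:=(c^{(i)}_{j,r})_{1\le j\le m,\,0\le r\le m-1}$ acting on the free digits $(\psi_0(z_0(n)),\ldots,\psi_{m-1}(z_{m-1}(n)))^{\mathrm T}\in\FF_q^m$.

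The next step is to argue that the net property is translation-invariant in this sense, so that it holds for every block $k$ if and only if it holds for the block $k=0$. This is where one must be careful: the relevant elementary-interval counting condition for a $(u,m,\bse,s)$-net, when phrased digitally, amounts to the requirement that certain systems of linear equations over $\FF_q$ have the expected number of solutions, and adding a fixed constant vector to the right-hand side does not change the solution count. Hence the block-$k$ net condition for the matrices $C^{(i)}$ is equivalent to the block-$0$ net condition for the truncated matrices $\what{C}^{(i)}$. By Proposition~\ref{prop2} applied to $\what{C}^{(1)},\ldots,\what{C}^{(s)}\in\FF_q^{m\times m}$, the latter holds if and only if, for all $d_1,\ldots,d_s\geq 0$ with $e_i\mid d_i$ and $d_1+\cdots+d_s\leq m-u$, the first $d_i$ rows of each $\what{C}^{(i)}$ together are linearly independent. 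Since the first $d_i$ rows of $\what{C}^{(i)}$ are precisely the vectors $(c^{(i)}_{j,0},\ldots,c^{(i)}_{j,m-1})$ for $1\le j\le d_i$, this is exactly the rank condition claimed. The case $d_1+\cdots+d_s=0$ is handled by the convention on empty sets, and restricting to $d_1+\cdots+d_s\geq 1$ as in the statement loses nothing because that case is vacuous.

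The main obstacle I anticipate is the bookkeeping in the first step: one must verify cleanly that passing to the finite matrices $\what{C}^{(i)}$ captures the full net property for every block $k$, and in particular that the finitely many relevant output digits depend only on the first $m$ columns while the high-order input digits contribute only a block-dependent translation. This is essentially the standard digital-net argument (as referenced for Proposition~\ref{prop2}), adapted to the $\bse$-constrained family of elementary intervals, so I would state it as a straightforward adaptation rather than reprove it in full, citing \cite[Section~4.4.2]{DP10} and \cite[Section~4.3]{niesiam} as in the proof of Proposition~\ref{prop2}.
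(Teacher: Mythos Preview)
Your proposal is correct and follows precisely the approach the paper intends: the paper does not spell out a proof of Proposition~\ref{prop3} but precedes it with the remark ``Using Proposition~\ref{prop2}, it is easy to determine the parameter $u$ of a $(u,\bse,s)$-sequence in base $q$ obtained by the digital method,'' and your argument is exactly the reduction to Proposition~\ref{prop2} via the standard block-by-block analysis of the digital construction. One small slip: where you write ``through only the first $m$ columns $(c^{(i)}_{j,r})_{1\le j\le m,\, r\ge 0}$'' you mean the first $m$ \emph{rows}; the point is that these rows split into the first $m$ columns (acting on the free low-order digits) plus the remaining columns (contributing the $k$-dependent constant shift), which you then state correctly.
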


\section{A construction of $(u,\bse,s)$-sequences} \label{sec3}

So far, the only verified family of $(u,\bse,s)$-sequences with $\bse \ne (1,\ldots,1)$ is that of generalized Niederreiter sequences,
according to a result of Tezuka~\cite{T12}. The construction of generalized Niederreiter sequences in~\cite{T93}, like the earlier construction
of Niederreiter sequences in~\cite{N88}, is based on rational function fields over finite fields. In this section we present a new construction
of $(u,\bse,s)$-sequences using arbitrary global function fields. Recall that a \emph{global function field} $F$ is a finite extension of the
rational function field $\FF_q(x)$ for some finite field $\FF_q$. If $\FF_q$ is algebraically closed in $F$, then $\FF_q$ is called the
\emph{full constant field} of $F$.

First we collect some basic facts and notation for global function fields. We refer to the monograph~\cite{Sti} for general background on
global function fields. Let $F$ be a global function field with full constant field $\FF_q$ and \emph{genus} $g$ and let $\PP_F$ denote the set 
of all places of $F$. We write $\deg(P)$ for the \emph{degree} of a place $P \in \PP_F$ and $\nu_P$ for the \emph{normalized discrete valuation} 
corresponding to $P$. A \emph{divisor} $D$ of $F$ is a formal sum 
$$D=\sum_{P\in\PP_F}n_PP$$
with $n_P\in\ZZ$ for all $P\in\PP_F$ and all but finitely many $n_P=0$. We write also $n_P=\nu_P(D)$.  
The \emph{degree} $\deg(D)$ of a divisor $D$ is defined by 
$$\deg(D)=\sum_{P\in\PP_F}n_P\deg(P)=\sum_{P\in\PP_F}\nu_P(D)\deg(P).$$
We say that $D_1\leq D_2$ if $\nu_{P}(D_1)\leq \nu_P(D_2)$ for all $P\in\PP_F$. 
The \emph{principal divisor} $\pdiv(f)$ of $f\in F^*$ is defined by 
$$\pdiv(f)=\sum_{P\in\PP_F}\nu_P(f)P.$$
The \emph{Riemann-Roch space} $$\cL(D) :=\{f\in F^*:\pdiv(f)+D\geq 0\}\cup\{0\}$$
is a finite-dimensional vector space over $\FF_q$. We write $\ell(D)$ for the dimension of this vector space. Obviously, $\cL(D_1)$ is a 
subspace of $\cL(D_2)$ whenever $D_1\leq D_2$. The celebrated Riemann-Roch Theorem \cite[Theorem 1.5.15]{Sti} ensures that 
$\ell(D)\geq \deg(D)+1-g$, and equality holds whenever $\deg(D)\geq 2g-1$ (see \cite[Theorem 1.5.17]{Sti}). For a rational place $P$ 
(i.e. $\deg(P)=1$), the Weierstrass Gap Theorem \cite[Theorem 1.6.8]{Sti} says that there are exactly $g$ \emph{gap numbers} $1= i_1< \cdots
<i_g\leq 2g-1$, that is, integers $i_j$, $1 \le j \le g$, such that $\ell(i_jP)=\ell\left((i_j-1)P\right)$. Note that for integers $n \ge 2g$
it is clear by the Riemann-Roch Theorem that $\ell(nP)=\ell((n-1)P)+1$.

Finally, choosing a place $P \in \PP_F$ with degree $e$ and a \emph{local parameter} $z$ at $P$, that is, an element $z\in F$ such that 
$\nu_P(z)=1$, then any $f\in F$ has a \emph{local expansion} at the place $P$ of the form 
$$f=\sum_{k=k_0}^\infty \beta_k z^k,$$
where $k_0 \in \ZZ$ with $\nu_P(f)\geq k_0$ and $\beta_k\in\FF_{q^e}$ for all $k \ge k_0$. A detailed description of how to obtain a local
expansion can be found in \cite[pp. 5--6]{NX01}.

For our construction we choose a dimension $s\in\NN$, a finite field $\FF_q$, a global function field $F$ with full constant field $\FF_q$ 
and genus $g$, and $s+1$ distinct places $P_\infty,P_1,\ldots,P_s$ of $F$ satisfying $\deg(P_\infty)=1$. The degrees of the places
$P_1,\ldots,P_s$ are arbitrary and we put $e_i= \deg(P_i)$ for $1\leq i\leq s$. 
First we construct a sequence $(y_r)_{r\geq 0}$ of elements of $F$ using the information on the dimensions $\ell(nP)$ for $n \in \NN_0$
collected above. This allows us to choose for each integer $r\geq 0$ an element
$$y_r\in\cL(n_rP_\infty) \setminus \cL((n_r-1)P_\infty).$$
Here $n_0<n_1 < n_2 <\cdots$ are the elements in increasing order of $\NN_0\setminus \{i_1,\ldots,i_g\}$, where $i_1,\ldots,i_g$ are the 
$g$ gap numbers of $P_\infty$. Note that $n_r=r+g$ for $r\geq g$. 
From the construction of the $y_r$ we see that $\nu_{P}(y_r)\geq 0$ for all places $P\in\PP_F\setminus\{P_\infty\}$
and all $r \ge 0$. Thus for $1 \le i \le s$, 
the local expansion of $y_r$ at $P_i$ (with local parameter $z_i$ at $P_i$) has the form 
$$y_r=\sum_{k=0}^\infty \beta_{k,r}^{(i)}z_i^k \qquad \mbox{with all $\beta_{k,r}^{(i)}\in\FF_{q^{e_i}}$}.$$
Using an ordered basis of $\FF_{q^{e_i}}/\FF_q$, we can identify $\beta_{k,r}^{(i)}$ with a column vector $\bsb_{k,r}^{(i)}\in\FF_q^{e_i}$. 

Now we construct generating matrices $C^{(i)}$, $1\leq i\leq s$, over $\FF_q$ columnwise by defining column $0$, column $1$, etc. 
For column $r$ ($r\geq 0$) of $C^{(i)}$, we concatenate the column vectors $\bsb_{k,r}^{(i)}$, $k=0,1,\ldots$. 
This completes the construction. 

\begin{theorem} \label{theorem1}
Let $F$ be a global function field with full constant field $\FF_q$ and let $P_{\infty},P_1,\ldots,P_s$ be $s+1$ distinct places of $F$ with
$\deg(P_{\infty})=1$. Then the matrices $C^{(1)},\ldots,C^{(s)} \in \FF_q^{\NN \times \NN_0}$ constructed above generate a $(u,\bse,s)$-sequence 
in base $q$ with $u=g$ and $\bse=(e_1,\ldots,e_s)$, where $g$ is the genus of $F$ and $e_i=\deg(P_i)$ for $1 \le i \le s$. 
\end{theorem}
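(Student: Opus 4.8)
The plan is to apply Proposition~\ref{prop3} and thereby reduce the claim to the following rank statement: for every integer $m>u=g$ and all $d_1,\dots,d_s\geq 0$ with $e_i\mid d_i$ and $1\leq d_1+\cdots+d_s\leq m-g$, the rows obtained by taking the first $d_i$ truncated rows $(c^{(i)}_{j,0},\dots,c^{(i)}_{j,m-1})$ of $C^{(i)}$, $1\leq j\leq d_i$, $1\leq i\leq s$, are linearly independent over $\FF_q$. I would argue by contradiction, assuming a nontrivial dependence. The first step is to unwind the block structure of the construction: writing $d_i=f_ie_i$, the first $d_i$ rows of $C^{(i)}$ are exactly the $\FF_q$-coordinate vectors (with respect to the fixed basis of $\FF_{q^{e_i}}/\FF_q$) of the local expansion coefficients $\beta_{0,r}^{(i)},\dots,\beta_{f_i-1,r}^{(i)}$ of $y_r$ at $P_i$. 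Hence a dependence relation is precisely a choice of $\FF_q$-linear functionals on each $\FF_{q^{e_i}}$, equivalently elements $\alpha_k^{(i)}\in\FF_{q^{e_i}}$ for $0\leq k\leq f_i-1$ not all zero, such that
$$
\Lambda(y_r):=\sum_{i=1}^s\sum_{k=0}^{f_i-1}\mathrm{Tr}_{\FF_{q^{e_i}}/\FF_q}\!\bigl(\alpha_k^{(i)}\beta_{k,r}^{(i)}\bigr)=0
\qquad\text{for }r=0,1,\dots,m-1 .
$$
Here the divisibility $e_i\mid d_i$ is what guarantees that we are summing over \emph{complete} coefficient blocks, so that $\Lambda$ is a clean sum of trace functionals in the full fields $\FF_{q^{e_i}}$.

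The next step brings in the function field. Since each $y_r$ has a pole of exact order $n_r$ at $P_\infty$ and no other poles, the functions $y_0,\dots,y_{m-1}$ have distinct pole orders and are therefore linearly independent over $\FF_q$. Because $m>g$, the Riemann--Roch Theorem gives $\ell(n_{m-1}P_\infty)=n_{m-1}+1-g=m$ (using $n_{m-1}=m-1+g$ and $\deg(n_{m-1}P_\infty)\geq 2g-1$), so $y_0,\dots,y_{m-1}$ form a basis of $\cL(n_{m-1}P_\infty)$. Consequently the displayed vanishing of $\Lambda$ on these generators forces $\Lambda\equiv 0$ on all of $\cL(n_{m-1}P_\infty)$. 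I would then view $\Lambda$ as factoring through the \emph{coefficient-extraction map}
$$
\cL(n_{m-1}P_\infty)\longrightarrow\prod_{i=1}^s\FF_{q^{e_i}}^{\,f_i},\qquad
f\longmapsto\bigl(\beta_{k}^{(i)}(f)\bigr)_{1\leq i\leq s,\,0\leq k\leq f_i-1},
$$
whose kernel is $\cL\bigl(n_{m-1}P_\infty-\sum_{i=1}^s f_iP_i\bigr)$.

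The crux of the argument, and the step I expect to be the main obstacle, is showing that this coefficient-extraction map is \emph{surjective}; this is exactly where the value $u=g$ is dictated. The divisor $G:=n_{m-1}P_\infty-\sum_{i=1}^s f_iP_i$ has degree $\deg(G)=(m-1+g)-\sum_{i=1}^s f_ie_i=(m-1+g)-(d_1+\cdots+d_s)\geq 2g-1$, precisely because of the hypothesis $d_1+\cdots+d_s\leq m-g$. Riemann--Roch then yields $\ell(G)=\deg(G)+1-g=m-(d_1+\cdots+d_s)$, so the image of the extraction map has dimension $m-\ell(G)=d_1+\cdots+d_s=\sum_i e_if_i$, which equals the $\FF_q$-dimension of the target; hence the map is onto. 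Finally, since $\Lambda$ is the composition of this surjection with the trace-pairing functional $(\beta_k^{(i)})\mapsto\sum_{i,k}\mathrm{Tr}_{\FF_{q^{e_i}}/\FF_q}(\alpha_k^{(i)}\beta_k^{(i)})$, nondegeneracy of the trace form forces all $\alpha_k^{(i)}=0$, contradicting the assumption that they are not all zero. This contradiction establishes the required linear independence, and Proposition~\ref{prop3} then gives the $(u,\bse,s)$-sequence property with $u=g$ and $\bse=(e_1,\dots,e_s)$. The degree computation $\deg(G)\geq 2g-1$ is the pivotal point: it is the tight interaction between the genus, the pole orders $n_r$, and the constraint $\sum_i d_i\leq m-g$ that makes the surjectivity hold with the sharp parameter $u=g$.
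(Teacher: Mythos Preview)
Your proof is correct and rests on the same Riemann--Roch computation as the paper's: the divisor $G=n_{m-1}P_\infty-\sum_i(d_i/e_i)P_i$ has degree $\ge 2g-1$, so $\ell(G)=m-(d_1+\cdots+d_s)$. The paper takes the dual and shorter route of bounding the \emph{column} kernel of $M$ directly by identifying it with a subspace of $\cL(G)$, which makes your trace-functional layer and the contradiction framing unnecessary---once you have established that the extraction map is surjective (equivalently, that $\operatorname{rank}M=d_1+\cdots+d_s$), the linear independence of the rows is already proved.
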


\begin{proof}
We write $C^{(i)}=\left(c_{j,r}^{(i)} \right)_{j \ge 1, r \ge 0} \in \FF_q^{\NN \times \NN_0}$ for $1 \le i \le s$. According to
Proposition~\ref{prop3}, it suffices to show that for
every integer $m>g$ and all $d_1,\ldots,d_s \in\NN_0$ such that $e_1|d_1,\ldots,e_s|d_s$ and $1 \leq d_1+\cdots+d_s\leq m-g$, the 
$(d_1+\cdots+d_s)\times m$ matrix $M$ over $\FF_q$ formed by the row vectors 
$$(c^{(i)}_{j,0},c^{(i)}_{j,1},\ldots,c^{(i)}_{j,m-1}) \in \FF_q^m$$
with $1 \le j \le d_i$ and $1 \le i \le s$ has rank $d_1+\cdots+d_s$. 

In order to prove this assertion, we show that the kernel of $M$ has dimension $\leq m-(d_1+\cdots+d_s)$. 
We take an arbitrary element $(v_0,\ldots,v_{m-1}) \in \FF_q^m$ of the kernel of $M$. Then 
\begin{equation} \label{eq1}
\sum_{r=0}^{m-1}v_rc^{(i)}_{j,r}=0 \qquad \mbox{for } 1 \le j \le d_i, \ 1 \le i \le s.
\end{equation}
In the following, we make use of the element $f:=\sum_{r=0}^{m-1}v_ry_r$ of $F$. 
From the construction of the $y_r$ and the definition of $f$ we derive that 
$$f\in {\cal L}\left(n_{m-1} P_\infty \right),$$
which is a Riemann-Roch space over $\FF_q$ with dimension $m$ since the divisor $n_{m-1} P_\infty=(m+g-1)P_\infty$ has degree 
$m+g-1> 2g-1$. Note that $y_0,\ldots,y_{m-1}$ form a basis of this Riemann-Roch space, and so each element of 
$\cL \left(n_{m-1}P_{\infty} \right)$ has a unique representation as an $\FF_q$-linear combination of $y_0,\ldots,y_{m-1}$.

From~\eqref{eq1} and the construction of the matrices $C^{(1)},\ldots,C^{(s)}$ we deduce that
$$\nu_{P_i}(f)\geq \frac{d_i}{e_i} \qquad \mbox{for } 1 \le i \le s.$$
Therefore 
$$f\in {\cal L}\left(n_{m-1} P_\infty-\sum_{i=1}^s\frac{d_i}{e_i} P_i\right),$$ which is a subspace of ${\cal L}\Big(n_{m-1} P_\infty\Big)$. 
The degree of the divisor $$D:=n_{m-1} P_\infty-\sum_{i=1}^s\frac{d_i}{e_i} P_i$$ satisfies $$\deg(D)=m+g-1-(d_1+\cdots+d_s)\geq 2g-1.$$ 
By the Riemann-Roch Theorem we know that the dimension of ${\cal L}(D)$ is $m-(d_1+\cdots+d_s)$. In view of the one-to-one correspondence
between elements of $\FF_q^m$ and elements of $\cL \left(n_{m-1} P_{\infty} \right)$ noted above, this yields the desired upper bound on the 
dimension of the kernel of $M$. 
\end{proof}

\begin{coro} \label{coro2}
With the conditions and notation in Theorem~\ref{theorem1}, the
matrices $C^{(1)},\ldots,C^{(s)}$ constructed above generate a $(t,s)$-sequence in base $q$ with quality parameter $t=g+\sum_{i=1}^s(e_i-1)$. 
\end{coro}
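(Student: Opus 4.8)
The plan is to derive Corollary~\ref{coro2} as an immediate consequence of Theorem~\ref{theorem1} together with Corollary~\ref{coro1}. First I would invoke Theorem~\ref{theorem1}, which asserts that the matrices $C^{(1)},\ldots,C^{(s)}$ generate a $(u,\bse,s)$-sequence in base $q$ with $u=g$ and $\bse=(e_1,\ldots,e_s)$, where $g$ is the genus of $F$ and $e_i=\deg(P_i)$ for $1\le i\le s$. The essential point is that Theorem~\ref{theorem1} has already done all the real work, namely the rank verification via Proposition~\ref{prop3} and the Riemann-Roch computation.

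Next I would apply Corollary~\ref{coro1}, which converts any $(u,\bse,s)$-sequence in base $b$ into a $(t,s)$-sequence in base $b$ with $t=u+\sum_{i=1}^s(e_i-1)$. Substituting the values $u=g$ and $e_i=\deg(P_i)$ supplied by Theorem~\ref{theorem1} into this formula yields
$$t=g+\sum_{i=1}^s(e_i-1),$$
which is precisely the claimed quality parameter. Thus the constructed matrices generate a $(t,s)$-sequence in base $q$ with this value of $t$.

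I do not anticipate any genuine obstacle here: the corollary is purely a matter of chaining the two earlier results and performing the trivial substitution. The only point requiring minimal care is to confirm that the hypotheses of Corollary~\ref{coro1} are met, that is, that $b=q\ge 2$, $s\ge 1$, $u=g\ge 0$, and $\bse\in\NN^s$ (the latter holding because each $e_i=\deg(P_i)\ge 1$). All of these are guaranteed by the standing assumptions of Theorem~\ref{theorem1}, so the deduction goes through without further comment.
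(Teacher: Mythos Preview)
Your proposal is correct and matches the paper's own proof exactly: the paper simply states that the corollary follows immediately from Theorem~\ref{theorem1} by applying Corollary~\ref{coro1}. Your added remarks about verifying the hypotheses of Corollary~\ref{coro1} are accurate but not strictly necessary, since they are immediate from the setup.
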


\begin{proof}
This follows immediately from Theorem~\ref{theorem1} by applying Corollary~\ref{coro1}. 
\end{proof}

\begin{example} \label{ex1} {\rm
If $F=\FF_q(x)$ is the rational function field over $\FF_q$ (here $g=0$) and $P_\infty$ is the infinite place of $F$, then the sequence 
$(y_r)_{r\geq 0}$ consists of elements of the polynomial ring $\FF_q[x]$ and we have $\deg(y_r)=r$ for all $r \ge 0$. Furthermore, if 
$P_1,\ldots,P_s$ are places of $F$ corresponding to distinct monic irreducible polynomials over $\FF_q$ (see \cite[Section~1.2]{Sti}), 
which serve as local parameters in the local expansions, then our construction is consistent with the one introduced in~\cite{Ho12}.}
\end{example}

\begin{remark} \label{rm5} {\rm
So far, the known constructions of generating matrices for digital $(t,s)$-sequences using arbitrary global function fields proceeded rowwise. 
The rowwise approach constructs for each component $i\in\{1,\ldots,s\}$ a sequence of elements in the global function field via Riemann-Roch spaces 
\cite{NX96, XN} or via differentials~\cite{niemay}. Then the row entries of the generating matrices are determined by using the coefficients 
of the local expansions of these elements at a fixed rational place. Our columnwise method constructs a sequence of elements in the 
global function field using Riemann-Roch spaces that are built using a fixed rational place. Then the columns of the $i$th generating matrix 
$C^{(i)}$ are determined by using the coefficients of the local expansions at the place $P_i$ corresponding to the $i$th component.  
The first columnwise construction of generating matrices was introduced by Niederreiter \cite{N88b}, which was later generalized by 
Hofer \cite{HoFFA, Ho12}. Those constructions exploited properties of certain elements of the polynomial ring $\FF_q[x]$.}
\end{remark}

\begin{remark} \label{rm6} {\rm
The basic ingredients of the construction of Niederreiter-Xing sequences in~\cite{XN} are, as in our construction, a global function field
$F$ with full constant field $\FF_q$ and $s+1$ distinct places $P_{\infty},P_1,\ldots,P_s$ of $F$ with $\deg(P_{\infty})=1$ and the degrees
$e_i=\deg(P_i)$ for $1 \le i \le s$ being arbitrary. We claim that a Niederreiter-Xing sequence with these data is a $(g,\bse,s)$-sequence in base $q$, 
where $g$ is the genus of $F$ and $\bse =(e_1,\ldots,e_s)$. This is shown by an appropriate modification of the proof of \cite[Theorem~1]{XN}.
We adhere to the notation in that proof. In view of Proposition~\ref{prop3}, it suffices to show that for every integer $m > g$ and all
$d_1,\ldots,d_s \in \NN_0$ such that $e_1|d_1,\ldots,e_s|d_s$ and $1 \le d_1 + \cdots + d_s \le m-g$, the vectors $\pi_m(\bsc_j^{(i)}) \in
\FF_q^m$, $1 \le j \le d_i$, $1 \le i \le s$, are linearly independent over $\FF_q$. In the course of the proof of \cite[Theorem~1]{XN},
an auxiliary element
$$
k \in \cL \Big(D+\sum_{i=1}^s \left(\left\lfloor \frac{d_i-1}{e_i} \right\rfloor +1 \right) P_i \Big) = \cL \Big(D+\sum_{i=1}^s \frac{d_i}{e_i} P_i
\Big)
$$
is considered. Furthermore, it is shown that $\nu_{P_{\infty}}(k) \ge m+g+1$. These two facts, together with $\nu_{P_{\infty}}(D)=0$, imply that
$$
k \in \cL \Big(D+\sum_{i=1}^s \frac{d_i}{e_i} P_i -(m+g+1) P_{\infty} \Big).
$$
Since $\deg(D)=2g$, we have
$$
\deg \Big(D+\sum_{i=1}^s \frac{d_i}{e_i} P_i -(m+g+1) P_{\infty} \Big) = g+\sum_{i=1}^s d_i -m-1 < 0,
$$
and so $k=0$ by \cite[Corollary 1.4.12(b)]{Sti}. An appeal to \cite[Lemma~2]{XN} completes the proof. By Corollary~\ref{coro1}, the
Niederreiter-Xing sequence under consideration is a $(t,s)$-sequence in base $q$ with $t=g+\sum_{i=1}^s (e_i-1)$. This recovers the main
result of~\cite{XN} in a different way. This value of $t$ is the same as in Corollary~\ref{coro2}. However, it should be pointed out    
that our new construction is considerably simpler than the one in~\cite{XN}. For instance, the auxiliary positive divisor $D$ of $F$ with
$\deg(D)=2g$ and $\nu_{P_{\infty}}(D)=0$ in~\cite{XN} is not needed in our construction. }
\end{remark}

\section{Finite-row generating matrices for $(u,\bse,s)$-sequences} \label{sec4}

Columnwise constructions as in this paper have certain advantages, for instance, one may obtain finite-row generating matrices 
more easily with this approach. For more details and the motivation for finite-row generating matrices, we refer the interested reader to 
\cite{HoFFA, Ho12} and the references therein. To drive home an important point, we cannot refrain from emphasizing one great 
asset of finite-row generating
matrices, namely that the sparsity of finite-row generating matrices leads to a considerable speedup in the computation of the matrix-vector
products that form the main operation in the actual implementation of the digital method (see Section~\ref{sec1}).

In this section we explore a more refined version of the columnwise construction in Section~\ref{sec3} using global function fields. We choose 
the $y_r$ in the sequence $(y_r)_{r\geq 0}$ by using subspaces of $\cL(n_{r} P_\infty)$ and in this way we are able to construct 
finite-row generating matrices with, in a certain sense, optimal row lengths.

As in Section~\ref{sec3}, let $s\in\NN$ be a given dimension, let $\FF_q$ be a finite field, let $F$ be a global function field with full 
constant field $\FF_q$ and genus $g$, and let $P_\infty,P_1,\ldots,P_s$ be $s+1$ distinct places of $F$ satisfying $\deg(P_\infty)=1$ and 
$\deg(P_i)=e_i$ for $1\leq i\leq s$. 
We construct $(y_r)_{r\geq 0}$ as before, but in the case where $r>g$ we choose $y_r$ such that
 $$y_r\in{\cal L}\left(n_{r} P_\infty-\sum_{i=1}^s(l_i+l_{i+1}+\cdots+l_s)\frac{v}{e_i} P_i\right)=:{\cal L }(D_r)$$ and 
 $$y_r\notin {\cal L}\left(n_{r-1} P_\infty-\sum_{i=1}^s(l_i+l_{i+1}+\cdots+l_s)\frac{v}{e_i} P_i\right)=:{\cal L}(D'_{r}). $$
Here $v:=\lcm(e_1,\ldots,e_s)$, whereas the integers $l_i \ge 0$ are determined by division with remainder in the nonnegative integers as follows:
\begin{eqnarray*}
r-g&=&  sv l_s+w_s, \ \ w_s\in\{0,\ldots,sv-1 \},\\
w_s &=& (s-1)vl_{s-1}+w_{s-1}, \ \ w_{s-1}\in\{0,\ldots,(s-1)v-1\},\\
 & \vdots &  \\
w_2&=& v  l_1+w_1, \ \ w_1\in\{0,\ldots,v-1\}.\end{eqnarray*}
Note that such a $y_r$ exists since $D'_r\leq D_r$ and $\deg(D_r)\geq r+g-(r-g)=2g$ and $\deg(D'_r)=\deg(D_r)-1$. 
The rest of the construction is carried out as in  Section~\ref{sec3} and leads to the new generating matrices $C^{(1)},\ldots,C^{(s)} \in
\FF_q^{\NN \times \NN_0}$. 

From the proof of Theorem~\ref{theorem1} and from Corollary~\ref{coro2}, we see that the generating matrices $C^{(1)},\ldots,C^{(s)}$ 
constructed in this way within the digital method yield a $(g,\bse,s)$-sequence in base $q$ with $\bse=(e_1,\ldots,e_s)$ and also a 
$(t,s)$-sequence in base $q$ with $t=g+\sum_{i=1}^s(e_i-1)$. Our more refined choice of the sequence $(y_r)_{r\geq 0}$ of elements of $F$ 
leads to bounds on the number of nonzero entries in each row of the generating matrices. To describe these bounds, we define the \emph{length}
of a row $(c_0,c_1,\ldots) \in \FF_q^{\NN_0}$ with only finitely many nonzero entries by $\max \, \{l \in \NN: c_{l-1} \ne 0 \}$, with the
proviso that the length is $0$ if $c_r=0$ for all $r \ge 0$.

\begin{theorem} \label{theorem2}
The generating matrices $C^{(1)},\ldots,C^{(s)}$ of the $(g,\bse,s)$-sequence in base $q$ constructed in this section satisfy the 
property that for all $1 \le i \le s$ and $d \in \NN$, the length of the $d$th row of $C^{(i)}$ is at most  
$$
g+s v\left\lfloor \frac{d-1}{v}\right\rfloor +iv. 
$$
Here $g$ is the genus of $F$, $\bse =(e_1,\ldots,e_s)$ with $e_i=\deg(P_i)$ for $1 \le i \le s$, and $v=\lcm(e_1,\ldots,e_s)$.
\end{theorem}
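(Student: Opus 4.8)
The plan is to track, for a fixed component $i$ and row index $d$, exactly which local expansion coefficient of the elements $y_r$ produces the entries of the $d$th row of $C^{(i)}$, and then to show that this coefficient vanishes for every column $r$ beyond the claimed bound. First I would fix $1 \le i \le s$ and $d \in \NN$ and set $\kappa := \lfloor (d-1)/e_i \rfloor$ and $h := \lfloor (d-1)/v \rfloor$, so that the asserted length bound reads $L := g + svh + iv$. Since column $r$ of $C^{(i)}$ is the concatenation of the blocks $\bsb^{(i)}_{0,r},\bsb^{(i)}_{1,r},\ldots$, each lying in $\FF_q^{e_i}$, the entry of the $d$th row in column $r$ is one fixed coordinate of the block $\bsb^{(i)}_{\kappa,r}$, and hence vanishes whenever $\beta^{(i)}_{\kappa,r}=0$. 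By the definition of the length of a row it therefore suffices to prove that $\beta^{(i)}_{\kappa,r}=0$ for every $r \ge L$. Because $i,v \ge 1$ we have $L>g$, so every such $r$ exceeds $g$ and the refined choice of $y_r$ made in this section is in force.

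Next I would exploit the membership $y_r \in \cL(D_r)$. Writing $S_i := l_i + l_{i+1} + \cdots + l_s$, the coefficient of $P_i$ in $D_r$ equals $-S_i v/e_i$, so $y_r \in \cL(D_r)$ forces $\nu_{P_i}(y_r) \ge S_i v/e_i$. Consequently the local expansion of $y_r$ at $P_i$ has $\beta^{(i)}_{k,r}=0$ for all $k < S_i v/e_i$, and in particular $\beta^{(i)}_{\kappa,r}=0$ as soon as $\kappa < S_i v / e_i$. Now $\kappa e_i \le d-1 < (h+1)v$, the last inequality because $h=\lfloor (d-1)/v\rfloor$ gives $h+1 > (d-1)/v$; dividing through by $e_i$ yields $\kappa < (h+1)v/e_i$. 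Thus the vanishing $\beta^{(i)}_{\kappa,r}=0$ will follow once I establish that $S_i \ge h+1$ for all $r \ge L$.

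The heart of the argument, and the step where the particular division-with-remainder scheme defining the $l_j$ does its work, is this lower bound on $S_i$. Telescoping the defining relations for the $l_j$ and $w_j$ gives
\begin{equation*}
r-g = \sum_{j=i}^s jv\, l_j + w_i, \qquad 0 \le w_i < iv.
\end{equation*}
Since $jv \le sv$ for $j \le s$, the sum on the right is at most $sv\, S_i$, so that $sv\, S_i \ge (r-g)-w_i > (r-g)-iv$. For $r \ge L = g+svh+iv$ the right-hand side is at least $svh$, whence $S_i > h$, and as $S_i$ is an integer we conclude $S_i \ge h+1$. Combined with the previous paragraph this yields $\beta^{(i)}_{\kappa,r}=0$ for all $r \ge L$, and the length bound follows. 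I expect the only delicate point to be the bookkeeping in this final estimate: one must verify that the bound $w_i < iv$ supplied by the nested remainders is exactly strong enough to absorb the term $iv$ in $L$, which is precisely what forces the coefficient $iv$ (rather than any smaller quantity depending on $i$) to appear in the statement.
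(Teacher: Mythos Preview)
Your proof is correct and follows essentially the same approach as the paper: reduce to showing that $\nu_{P_i}(y_r)\ge (h+1)v/e_i$ for $r\ge L$, use $y_r\in\cL(D_r)$ to get $\nu_{P_i}(y_r)\ge S_iv/e_i$, and then establish $S_i\ge h+1$. The only difference is in this last step: the paper argues by a short case distinction (either $l_s\ge h+1$, or $l_s=h$ and then $w_s\ge iv$ forces some $l_j\ge 1$ with $i\le j\le s-1$), whereas your telescoping identity $r-g=\sum_{j=i}^s jv\,l_j+w_i$ combined with the crude bound $jv\le sv$ gives the same conclusion more uniformly.
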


\begin{proof}
We fix $i\in\{1,\ldots,s\}$ and $d\in\NN$ and let $a \in \NN_0$ and $w \in \{1,\ldots,v\}$ be such that $d=av+w$. We write
$C^{(i)}=\left(c_{j,r}^{(i)} \right)_{j \ge 1, r \ge 0} \in \FF_q^{\NN \times \NN_0}$. In order to ensure that for every 
$r\geq g+s v\left\lfloor ({d-1})/{v}\right\rfloor +iv=g+sva+iv$, the entry $c_{d,r}^{(i)}$ in the $d$th row of $C^{(i)}$ is $0$, 
it suffices to show by the construction of $C^{(i)}$ that $\nu_{P_i}(y_r)\geq (a+1)v/e_i$. (Note that $(a+1)v\geq d$.) 
Preliminarily, we consider the case where $r-g\geq sva$ and we see from the definition of $l_s$ that $l_s\geq a$. Now in the case where 
$r-g\geq sva+iv$, we have either $l_s\geq a+1$, or otherwise $l_s =a$ and $l_j\geq 1$ for at least one $j\in\{i,i+1,\ldots,s-1\}$. 
Thus, we always have $l_i + l_{i+1} + \cdots +l_s \ge a+1$. From $y_r \in \cL(D_r)$ we deduce that
$$
\nu_{P_i}(y_r) \ge (l_i + l_{i+1} + \cdots +l_s) \frac{v}{e_i} \ge \frac{(a+1)v}{e_i}
$$
and this proves our claim. 
\end{proof}

\begin{remark} \label{rm7} {\rm
In the case where the genus $g$ is $0$ and $v=1$, the matrices $C^{(1)},\ldots,C^{(s)}$ constructed in this section have asymptotically 
shortest possible row lengths in the sense of Hofer and Larcher \cite[Section~4]{HL} (compare also with~\cite[p. 590]{HoFFA},
\cite[Remark~3]{Ho12}, and~\cite{N12}).}
\end{remark}

\begin{remark} \label{rm8} {\rm
Given a prime power $q$ and a dimension $s \in \NN$, let $F$ be a global function field with full constant field $\FF_q$ such that $F$
contains at least $s+1$ rational places. Then in the construction in this section we can, besides $P_{\infty}$, also take $P_1,\ldots,P_s$
to be rational places of $F$, that is, $e_i=\deg(P_i)=1$ for $1 \le i \le s$. In this way we obtain a $(t,s)$-sequence in base $q$ with
finite-row generating matrices and with $t=g$, the genus of $F$. Now we optimize the construction in the following sense. As in
\cite[Eq. (10)]{NX96}, we define $V_q(s)$ to be the least $g \in \NN_0$ such that there exists a global function field with full constant
field $\FF_q$, genus $g$, and at least $s+1$ rational places. Then there exists a $(V_q(s),s)$-sequence in base $q$ with finite-row generating
matrices. It was shown in \cite[Theorem~4]{NX96} that $V_q(s)=O(s)$ as $s \to \infty$ with an absolute implied constant. Hence for fixed $q$,
we obtain for any dimension $s$ a $(t,s)$-sequence in base $q$ with finite-row generating matrices such that the value of the quality
parameter $t$ grows linearly in $s$ as $s \to \infty$. This growth rate of $t$ is best possible since it is well known that for any fixed
base $b$, the least $t$-value $t_b(s)$ of any $(t,s)$-sequence in base $b$ grows at least linearly as a function of $s$ as $s \to \infty$
(see \cite[Theorem~8]{NX96b} and \cite[Theorem~1]{Sch}). For the only construction of $(t,s)$-sequences in base $q$ with finite-row
generating matrices available for any $q$ and $s$ that was known previously, namely the construction in~\cite{Ho12}, the least possible
value of $t$ grows at the rate $s \log s$ as $s \to \infty$ for fixed $q$. }
\end{remark}

\section{Discrepancy bounds} \label{sec5}
 
All known upper bounds on the star discrepancy $D_N^*(S)$ of a $(t,s)$-sequence $S$ in base $b$ are of the form
\begin{equation} \label{eq51}
D_N^*(S) \le c N^{-1} (\log N)^s +O(N^{-1} (\log N)^{s-1}) \qquad \mbox{for all } N \ge 2,
\end{equation}
where the constant $c > 0$ and the implied constant in the Landau symbol depend only on $b$, $s$, and $t$. The classical formulas for $c$ were
established by Niederreiter \cite[Section~4]{N87} and they are summarized in \cite[Theorem 4.17]{niesiam}. Later, ameliorated values of $c$
were obtained by Kritzer~\cite{Kr} and Faure and Lemieux~\cite{FL}. The currently best values of $c$ are those of Faure and Kritzer~\cite{FK},
namely $c=c_{{\rm FK}}$ given by
$$
c_{{\rm FK}}=
\begin{cases} 
\frac{b^{t}}{s!} \cdot \frac{b^2}{2(b^2-1)}\left(\frac{b-1}{2\log b}\right)^s & \text{if $b$ is even,} \\ 
\frac{b^{t}}{s!} \cdot \frac{1}{2}\left(\frac{b-1}{2\log b}\right)^s & \text{if $b$ is odd.} 
\end{cases} 
$$
It follows that for every base $b \ge 2$ we have
\begin{equation} \label{eq52}
c_{{\rm FK}} \ge \frac{b^t}{s!} \cdot \frac{1}{2} \left(\frac{b-1}{2 \log b} \right)^s.
\end{equation}

By applying the signed-splitting technique of Atanassov~\cite{Ata}, Tezuka~\cite{T12} established an upper bound on the star discrepancy
$D_N^*(S)$ of a $(u,\bse,s)$-sequence $S$ in base $b$ which is of the form~\eqref{eq51} with the constant $c=c_{{\rm Tez}}$ given by
\begin{equation} \label{eq53}
c_{{\rm Tez}} = \frac{b^u}{s!} \prod_{i=1}^s \frac{\left\lfloor b^{e_i}/2 \right\rfloor}{e_i \log b} \le
\frac{b^u}{s!} \cdot \frac{b^{e_1 + \cdots + e_s}}{(2 \log b)^s e_1 \cdots e_s}.
\end{equation}
Note that our Definition~\ref{defi2} of a $(u,\bse,s)$-sequence in base $b$ is slightly stronger than Tezuka's original definition 
in~\cite{T12}, and so Tezuka's discrepancy bound is obviously valid for our concept of a $(u,\bse,s)$-sequence in base $b$ as well.

The new sequences constructed in Sections~\ref{sec3} and~\ref{sec4} are $(t,s)$-sequences in base $q$ as well as $(u,\bse,s)$-sequences
in base $q$ with suitable $t$, $u$, and $\bse$. Therefore we can apply two versions of the discrepancy bound~\eqref{eq51}, namely
the one with $c=c_{{\rm FK}}$ and the one with $c=c_{{\rm Tez}}$. It is of interest to compare these two bounds. Recall that for
these sequences we have $u=g$, the genus of the global function field $F$, as well as $\bse =(e_1,\ldots,e_s)$ and $t=g+ \sum_{i=1}^s
(e_i-1)$, where $e_i=\deg(P_i)$ for $1 \le i \le s$. Then from~\eqref{eq52} and~\eqref{eq53} we obtain
$$
\frac{c_{{\rm FK}}}{c_{{\rm Tez}}} \ge \frac{1}{2} \left(\frac{q-1}{q} \right)^s \prod_{i=1}^s e_i.
$$
Thus, if
\begin{equation} \label{eq54}
\prod_{i=1}^s e_i > 2 \left(\frac{q}{q-1} \right)^s,
\end{equation}
then the discrepancy bound with the constant $c_{{\rm Tez}}$ is better than the one with the constant $c_{{\rm FK}}$. The
condition~\eqref{eq54} will be satisfied in many cases.

Consider a typical situation where we fix a global function field $F$ with full constant field $\FF_q$ such that $F$ has at least
one rational place $P_{\infty}$. We arrange all distinct places in $\PP_F \setminus \{P_{\infty} \}$ into a list $P_1,P_2,\ldots$
in an arbitrary manner. Then we have the following result.

\begin{prop} \label{prop4}
Let $P_1,P_2,\ldots$ be a list of all distinct places in $\PP_F \setminus \{P_{\infty}\}$ as above, where $F$ is a given global function
field with full constant field $\FF_q$. Then for any $0 < \varepsilon < 1/q$ we have
$$
\prod_{i=1}^s \deg(P_i) > (\log_q s)^{((1/q) -\varepsilon)s}
$$
for all sufficiently large $s$, where $\log_q$ denotes the logarithm to the base $q$.
\end{prop}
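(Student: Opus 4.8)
The plan is to reduce the claim to a counting estimate for places of small degree and then invert it. First I would note that, since every $\deg(P_i)\ge 1$, for each fixed $s$ the product $\prod_{i=1}^s\deg(P_i)$ is minimized, over all ways of choosing which $s$ places occupy the first $s$ positions, by taking $s$ places of least degree. As the ordering of the list $P_1,P_2,\dots$ is arbitrary, it therefore suffices to establish the bound for the list obtained by ordering the places of $\PP_F\setminus\{P_\infty\}$ by nondecreasing degree. Equivalently, writing $d_1\le d_2\le\cdots$ for the degrees of the places in $\PP_F\setminus\{P_\infty\}$ repeated according to multiplicity, I would prove $\prod_{i=1}^s d_i>(\log_q s)^{(1/q-\varepsilon)s}$ for all sufficiently large $s$.

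The key input is the distribution of places by degree. Let $B_n$ denote the number of places of $F$ of degree $n$ and put $N(D)=\sum_{n=1}^D B_n$. From the Hasse--Weil bound, i.e.\ the rationality of the zeta function of $F$ together with $|\alpha_j|=q^{1/2}$ for its reciprocal roots (see \cite{Sti}), one has $\sum_{d\mid n}d\,B_d=q^n+O(g\,q^{n/2})$, and M\"obius inversion gives $B_n=q^n/n+O(q^{n/2})$. Summing, there is a constant $C=C(F)$ with $N(D)\le C\,q^D/D$ for all $D\ge 1$. Now set $D^\ast=\lceil\log_q s\rceil$, so that $q^{\,D^\ast-1}\le s$. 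Then $N(D^\ast-1)\le C\,q^{\,D^\ast-1}/(D^\ast-1)\le C\,s/(D^\ast-1)=o(s)$ as $s\to\infty$, since $D^\ast-1\ge\log_q s-1\to\infty$. (Removing the single place $P_\infty$ from the count changes $N$ by at most $1$ and is harmless.)

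I would then conclude as follows. Because $N(D^\ast-1)<s$ for large $s$, the $s$ places of least degree contain every place of degree at most $D^\ast-1$; hence at most $N(D^\ast-1)=o(s)$ of them have degree below $D^\ast$, and so at least $s-N(D^\ast-1)=(1-o(1))s$ of them have degree at least $D^\ast\ge\log_q s$. Bounding the remaining factors trivially below by $1$,
$$\prod_{i=1}^s d_i\ \ge\ (D^\ast)^{\,s-N(D^\ast-1)}\ \ge\ (\log_q s)^{(1-o(1))s}.$$
Since $\log_q s\ge 1$ and $1-o(1)>1/q-\varepsilon$ for $s$ large, the right-hand side exceeds $(\log_q s)^{(1/q-\varepsilon)s}$, which is the assertion. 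In fact this argument proves the stronger statement in which the exponent $(1/q-\varepsilon)s$ is replaced by $(1-o(1))s$, so the stated bound holds with considerable room.

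The only substantial ingredient is the place-counting estimate and its inversion. What makes the argument work is the factor $1/n$ in $B_n\sim q^n/n$: it is exactly this that yields $N(D^\ast-1)=o(s)$ rather than merely $O(s)$, and hence forces almost all of the $s$ smallest places to have degree close to $\log_q s$. I expect this counting step, rather than the surrounding combinatorics, to be the main point requiring care; the reduction to the degree-sorted ordering and the final comparison of exponents are routine.
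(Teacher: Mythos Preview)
Your argument is correct and in fact sharper than the paper's. Both proofs begin the same way: reduce to the degree-sorted list and invoke the prime-number-theorem asymptotic $N(D)\sim\frac{q}{q-1}\,q^D/D$ for the number of places of degree at most $D$. The paper then introduces the index $j=j(s)$ with $T_j\le s+1<T_{j+1}$, bounds $\sum_{i=1}^s\log d_i$ from below by $\sum_{k=1}^j B_k\log k$, and estimates the latter via a truncation at $k\ge j^\delta$; relating $s$ back to $T_{j+1}$ rather than $T_j$ costs a factor of $q$, which is why the final exponent is only $(1/q-\varepsilon)s$. Your route is more direct: with $D^\ast=\lceil\log_q s\rceil$ you observe that $N(D^\ast-1)=O(s/\log_q s)=o(s)$, so all but $o(s)$ of the $s$ smallest degrees are at least $D^\ast\ge\log_q s$, and bounding the remaining factors by $1$ immediately yields $\prod d_i\ge(\log_q s)^{(1-o(1))s}$. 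This avoids the index-shift loss entirely and proves the proposition with the stronger exponent $(1-o(1))s$; the stated bound with $(1/q-\varepsilon)s$ follows a fortiori. The only technical point is the upper bound $N(D)\le Cq^D/D$, which you derive from Hasse--Weil and M\"obius inversion; the paper uses the equivalent PNT asymptotic from~\cite{KS}, so the inputs are the same.
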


\begin{proof}
For any $k \in \NN$, let $B_k$ be the number of places of $F$ of degree $k$ and let $T_k=\sum_{h=1}^k B_h$ be the number of places of $F$
of degree $\le k$. Let $f_0=\deg(P_{\infty})=1,f_1,f_2,\ldots$ be the nondecreasing sequence of positive integers that is obtained by
listing each $k \in \NN$ with multiplicity $B_k$. Note that $f_0,f_1,f_2,\ldots$ is also the list of the degrees of all places of $F$
in nondecreasing order. 

Throughout the proof we assume that $s$ is sufficiently large. Let $j=j(s)$ be the largest $h \in \NN$ with $T_h \le s+1$. Then 
\begin{equation} \label{eq55}
\sum_{i=1}^s \log \deg(P_i) \ge \sum_{i=1}^s \log f_i = \sum_{i=0}^s \log f_i \ge \sum_{i=0}^{T_j-1} \log f_i = \sum_{k=1}^j B_k \log k.
\end{equation}
For any $0 < \delta < 1$ we have
\begin{equation} \label{eq56}
\sum_{k=1}^j B_k \log k \ge \sum_{k=\lceil j^{\delta} \rceil}^j B_k \log k \ge \delta (T_j-T_{\lceil j^{\delta} \rceil -1}) \log j.
\end{equation}
The Prime Number Theorem for global function fields~\cite{KS} yields
\begin{equation} \label{pnt}
T_j=\frac{q}{q-1} \cdot \frac{q^j}{j}+o \Big(\frac{q^j}{j} \Big) \qquad \mbox{as } j \to \infty.
\end{equation}
Noting that with $s$ also $j=j(s)$ is sufficiently large, we deduce from~\eqref{eq55}, \eqref{eq56}, and~\eqref{pnt} that for any
$0 < \varepsilon_1 < q/(q-1)$ we have
$$
\sum_{i=1}^s \log \deg(P_i) \ge \sum_{k=1}^j B_k \log k \ge \left(\frac{q}{q-1} -\varepsilon_1 \right) \frac{q^j}{j} \log j.
$$
The definition of $j$ implies that $T_{j+1} > s+1$. Using again~\eqref{pnt}, we obtain for any $\varepsilon_2 > 0$ that
$$
s < T_{j+1} \le \left(\frac{q}{q-1} +\varepsilon_2 \right) \frac{q^{j+1}}{j+1} < \left(\frac{q^2}{q-1} +\varepsilon_2 q \right) 
\frac{q^j}{j},
$$
and so for any $0 < \varepsilon_3 < (q-1)/q^2$ we have
$$
\frac{q^j}{j} > \left(\frac{q-1}{q^2} -\varepsilon_3 \right) s.
$$
Similarly,
$$
s < T_{j+1} \le \frac{3q^{j+1}}{j+1} < q^j,
$$
and so $j > \log_q s$. By combining these inequalities, we get
$$
\sum_{i=1}^s \log \deg(P_i) > \left(\frac{q}{q-1} -\varepsilon_1 \right) \left(\frac{q-1}{q^2} -\varepsilon_3 \right) s \log \log_q s.
$$
Now for a given $0 < \varepsilon < 1/q$ we can choose $\varepsilon_1$ and $\varepsilon_3$ suitably small such that
$$
\left(\frac{q}{q-1} -\varepsilon_1 \right) \left(\frac{q-1}{q^2} -\varepsilon_3 \right) \ge \frac{1}{q} -\varepsilon.
$$
Consequently,
$$
\sum_{i=1}^s \log \deg(P_i) > \left(\frac{1}{q} -\varepsilon \right) s \log \log_q s
$$
for all sufficiently large $s$, and this implies the desired result.
\end{proof}

If we now take for the given global function field $F$ and a given $s \in \NN$ the first $s$ places $P_1,\ldots,P_s$ from the list
$P_1,P_2,\ldots$ in Proposition~\ref{prop4}, then it follows from this result that the condition~\eqref{eq54} is satisfied for all
sufficiently large $s$.

\begin{remark} \label{rm9} {\rm
In view of Remark~\ref{rm6}, the discussion in this section applies in exactly the same way to upper bounds on the star discrepancy
of the Niederreiter-Xing sequences constructed in~\cite{XN}. }
\end{remark}

\vspace{1.5cm}

\noindent
Roswitha Hofer, Institute of Financial Mathematics, University of Linz, Altenbergerstr. 69, A-4040 Linz, Austria; email: {\tt roswitha.hofer@jku.at}

\bigskip

\noindent
Harald Niederreiter, Johann Radon Institute for Computational and Applied Mathematics, Austrian Academy of Sciences, Altenbergerstr. 69,
A-4040 Linz, Austria, and Department of Mathematics, University of Salzburg, Hellbrunnerstr. 34, A-5020 Salzburg, Austria;
email: {\tt ghnied@gmail.com} 

\end{document}